\DeclareMathOperator*{\argmax}{arg\,max}
\def\BState{\State\hskip-\ALG@thistlm}
\title{\LARGE \bf
Safe Dual Gradient Method for Network Utility Maximization Problems
}
\author{Berkay Turan  \and Mahnoosh Alizadeh% <-this % stops a space
\thanks{B. Turan and M. Alizadeh are with  Dept. of ECE, UCSB, Santa Barbara, CA, USA. This work is supported by  NSF grant \#1847096. E-mails: {\url{bturan@ucsb.edu}, \url{alizadeh@ucsb.edu}}}% <-this % stops a space
}
\begin{document}

\maketitle
\thispagestyle{empty}
\pagestyle{empty}

%%%%%%%%%%%%%%%%%%%%%%%%%%%%%%%%%%%%%%%%%%%%%%%%%%%%%%%%%%%%%%%%%%%%%%%%%%%%%%%%
\begin{abstract}
In this paper, we introduce a novel first-order dual gradient algorithm for solving network utility maximization problems that arise in resource allocation schemes over networks with safety-critical constraints. Inspired by applications where customers' demand can only be affected through posted prices and real-time two-way communication with customers is not available, we require an algorithm to generate \textit{safe prices}. This means that at no iteration should the realized demand in response to the posted prices violate the safety constraints of the network.  Thus, in contrast to existing first-order methods, our algorithm, called the safe dual gradient method (SDGM), is guaranteed to produce feasible primal iterates at all iterations. We ensure primal feasibility by 1) adding a diminishing safety margin to the constraints, and 2) using a sign-based dual update method with different step sizes for plus and minus directions. In addition, we prove that the primal iterates produced by the SDGM achieve a sublinear static regret of ${\cal O}(\sqrt{T})$.

\end{abstract}
\theoremseparator{.}

\newtheorem{proposition}{Proposition}
\newtheorem{theorem}{Theorem}
\newtheorem*{theoremnonumber}{Theorem}
\newtheorem{corollary}{Corollary}
\newtheorem{lemma}{Lemma}
\newtheorem{Fact}{Fact}
\newtheorem{remark}{Remark}
\newtheorem{assumption}{Assumption}
\newtheorem{definition}{Definition}

\newcommand{\eqdef}{\vcentcolon=}
\newcommand{\beq}{\begin{equation}}
\newcommand{\eeq}{\end{equation}}
\newcommand{\ie}{i.e., }

%%%%%%%%%%%%%%%%%%%%%%%%%%%%%%%%%%%%%%%%%%%%%%%%%%%%%%%%%%%%%%%%%%%%%%%%%%%%%%%%
\section{INTRODUCTION}

Many applications falling within the scope of resource allocation over networks, e.g., power distribution systems \cite{samadi2010optimal}, congestion control in data networks \cite{kelly1998rate}, and wireless cellular networks \cite{chiang2004balancing}, deal with a multi-agent optimization problem that falls under the general umbrella of \emph{network utility maximization} (NUM) problems. The shared goal in these problems is to allocate the resources to the users subject to a set of coupling constraints such that the total utility of the users is maximized.

In NUM problems, the user-specific utility functions are assumed to be private to the users and therefore a centralized solution is not possible. Accordingly, distributed optimization methods have become suitable tools thanks to the separable structure of NUM problems \cite{palomar2006tutorial,necoara2011parallel}. The idea is to decompose the main problem into sub-problems that can be solved by the individual users and use the solutions of the sub-problems to solve the main problem \cite{bertsekas1997nonlinear,bertsekas2015parallel}, which has been widely advocated for use in different applications, e.g., \cite{li2011optimal,kelly1998rate}. Among the two main types of decomposition methods, primal decomposition methods correspond to a direct allocation of the resources by a central coordinator and solve the primal problem, whereas dual decomposition methods based on the Lagrangian dual problem \cite{shor2012minimization} correspond to resource allocation via pricing and solve the dual problem \cite{palomar2006tutorial}. Due to the structure of NUM problems, the latter approach has been widely adopted in the literature \cite{palomar2006tutorial,nedic2009approximate,beck20141}. Additionally, it gives users the freedom of determining their own resource consumption based on prices.

The (dual) subgradient method is the most basic approach for solving the Lagrangian dual problem, whose convergence properties under various step-size rules have been well established \cite{polyak1987introduction,shor2012minimization,nesterov2009primal,bertsekas1997nonlinear,bertsekas2003convex}. Besides the performance of the subgradient method in the dual space,  scholars have also focused on characterizing the suboptimality, the convergence rate, and the infeasibility amount of the primal iterates\footnote{The suboptimality is measured by the difference in the optimal objective value and the objective value of the iterates. The convergence rate is measured by the distance of the iterates to the optimal solution. The infeasibility amount is measured by the norm of the constraint violation.}. Under no strong concavity or smoothness assumptions on the objective function, the average of the primal sequence produced by the dual subgradient method is shown to achieve an ${\cal O}(1/\sqrt{t})$ primal suboptimality and infeasibility \cite{nedic2009approximate}, where $t$ is the iteration number. When the objective function is strongly concave, the dual problem is smooth and therefore it is possible to achieve a rate of ${\cal O}(1/t)$ for the primal suboptimality, convergence rate, and infeasibility of the last iterate \cite{beck20141} using accelerated methods (e.g., \cite{nesterov1983method}). Using primal averaging schemes, rates of ${\cal O}(1/t^2)$ for the primal suboptimality and infeasibility can be obtained \cite{necoara2013rate,patrinos2013accelerated,chernov2016fast}. Under an additional smoothness assumption on the objective function, global linear convergence rates are achieved for linearly constrained \cite{necoara2015linear} and unconstrained \cite{scaman2017optimal,uribe2020dual} convex optimization problems over networks. It is worthwhile to highlight that none of these works guarantee feasible iterates throughout the optimization process, but only provide bounds on the infeasibility amount of the primal iterates. Therefore, solutions are only realizable after convergence to a near-feasible point for resource allocation systems with safety-critical constraints (even at a near-feasible point, the amount of infeasibility still needs to be accounted for).

% Although these methods establish bounds on the infeasibility of the primal iterates, there is no obvious way to modify the algorithms such that the primal iterates are always feasible. Therefore, solutions are only implementable after convergence to a near-optimal point for resource allocation systems with safety-critical constraints (even at a near optimal point, the amount of infeasibility still needs to be accounted for).

This paper is motivated by network resource allocation applications, where 1) users determine their own resource consumption in response to the prices and the realized consumption is only observed afterward, and 2) the system has safety-critical hard constraints that should not be violated by the users' resource consumption at any time. For instance, in price-based demand response, users determine their own electricity consumption in response to prices, where the prices should be set such that the realized demand does not violate the capacity constraints of the electric grid \cite{vardakas2014survey}. This is to ensure the safe operation of the system because violating the capacity constraints could cause physical damage to the grid. This implies that the resource consumption of the users (i.e., primal variables) in response to the prices (i.e., the dual variables) should always satisfy the constraints of the system (i.e., be feasible). This allows users to have realizable demand all the time and gain some utility throughout the optimization process. Unlike existing methods which produce prices that are only implementable (i.e., safe) after convergence to a near-optimal point, our framework does not require convergence before prices can be posted. Hence, it removes the need for complex negotiations with users over what their potential demand would be in response to different prices in order to converge to the optimal price.

% Existing methods produce solutions that are only implementable after convergence to a near-optimal point for safety-critical resource allocation systems.
% %(e.g., violating the capacity constraints could cause physical damage in a power system)
% even at a near optimal point, the amount of infeasibility still needs to be accounted for. On the other hand, it can be desirable to produce feasible solutions at all iterations so that users can access the resources and gain some utility throughout the optimization process. 

To this end, in this paper, we develop a distributed algorithm for NUM based on the dual decomposition scheme, called the safe dual gradient method (SDGM), that produces feasible primal iterates at all iterations. Our method does not use any second-order information (except for a lower bound on the strong concavity constant) and the dual updates solely rely on the constraints evaluated at the current feasible primal iterate.
Our contributions are as follows:
\begin{itemize}[wide]
    \item We introduce a novel algorithm, the SDGM, for solving NUM problems in a distributed fashion. We characterize a principled way to choose algorithm parameters to guarantee feasible primal iterates at all iterations.
    \item We prove that the static regret incurred by the feasible primal iterates produced by the SDGM, i.e., the cumulative gap between the optimal objective value and the objective function evaluated at the primal iterates, up to time $T$ is bounded by ${\cal O}(\sqrt{T})$.
    \item We numerically evaluate our algorithm to support our theoretical findings and compare its performance to existing first-order distributed methods for NUM problems.
\end{itemize}
The primal feasibility and the regret guarantees of the SDGM result from a combination of two ingredients: 1) by adding a safety margin to the constraints, we perturb the dual gradients and increase the dual variables before the constraint is violated (in contrast, the basic dual subgradient method \cite{palomar2006tutorial,beck20141} only increases a dual variable after the corresponding constraint is violated), and 2) we only use the sign of the perturbed gradient and utilize different step-sizes for plus and minus directions. The latter allows us to have global control over the changes in the dual variables independent of the values of the constraints. This is done to ensure a sufficient amount of increase in a dual variable whenever the corresponding constraint is close to being tight, which is crucial for the feasibility of the primal iterates.

\noindent
\textbf{Related work: }
    Besides dual (sub)gradient methods for solving the NUM problem, our work is closely related to interior point methods and safe learning/optimization literature.
\begin{enumerate}[wide]
    \item \emph{Interior point methods: }Interior point methods solve an inequality constrained problem by converting it into a sequence of equality constrained problems using barrier functions, and implementing Newton's method to solve the sequence of problems \cite{boyd2004convex}. They produce feasible iterates, however,  Newton's method is a second-order method that requires the Hessian, which is generally not available in the applications of interest to this paper, such as demand response with no two-way communications. Accordingly, in \cite{armand2000feasible} a feasible interior point method is introduced by approximating the Hessian using the first-order information. However, the algorithm defines a primal update rule, whereas in practical applications we would like to allow users to freely determine their resource consumption in response to the posted prices. Similarly in \cite{wei2010distributed}, the authors propose a distributed Newton method for NUM problems, where the Hessian is approximated by second-order information exchange between the users and the primal updates follow a Newton direction update rule. Closest to the setup we study in this paper would be \cite{athuraliya2000optimization,necoara2009interior}. In \cite{athuraliya2000optimization}, a Newton-like dual update is proposed by approximating the Hessian using only the first-order information. However, only asymptotic convergence of the algorithm is proven and the feasibility of the primal iterates is not guaranteed. On the other hand, the authors of \cite{necoara2009interior} propose an interior point method using Lagrangian dual decomposition with theoretical guarantees, however,  it requires the exact Hessian for the dual update.
    % \item \emph{Safe convex optimization/learning: }This line of work aims to develop safe algorithms that produce feasible iterates/actions for the optimization\cite{usmanova2019safe}/bandit\cite{amani2019linear} frameworks, respectively. In both frameworks, the feasible set is unknown and the approach is to conservatively estimate the feasible set and pick the primal iterates/actions accordingly. In contrast, although the feasible set is known, the dual decomposition architecture does not allow for direct control of the primal iterates, which differentiates our work from this literature.
    \item \emph{Safe optimization/learning: }This line of work aims to develop safe algorithms that produce feasible iterates/actions for the optimization\cite{usmanova2019safe, allibhoy2021anytime}/bandit\cite{amani2019linear} frameworks, respectively. In \cite{usmanova2019safe} and \cite{amani2019linear}, the feasible set is unknown and the approach is to conservatively estimate the feasible set and pick the primal iterates/actions accordingly. In \cite{allibhoy2021anytime}, the gradient flow that directly optimizes the primal variables is augmented with a control barrier function to maintain safety. In contrast, although the feasible set is known, the dual decomposition architecture does not allow for direct control of the primal iterates, which differentiates our work from this literature.
\end{enumerate}
\noindent
\textbf{Paper Organization:} The remainder of the paper is organized as follows. In Section~\ref{sec:problem}, we formalize the problem setup. In Section~\ref{sec:sdgm}, we describe the SDGM (Algorithm~\ref{alg:safedual}) and in Section~\ref{sec:regret}, we prove its feasibility and regret guarantees. In Section~\ref{sec:num}, we provide a numerical study demonstrating the efficacy of the SDGM.

\noindent
\textbf{Notation and Basic Definitions:} We denote the set of real numbers by ${\mathbb R}$ and the set of non-negative real numbers by ${\mathbb R}_+$. Unless otherwise specified, $\| \cdot \|$ denotes the standard Euclidean norm and $\|\cdot\|_p$ denotes the $p$-norm. Given a positive integer $n>0$, $[n]$ denotes the set of integers $\{1,2,\dots,n\}$. Given a vector $x\in{\mathbb R}^n$, $x_i\in{\mathbb R}$ or $[x]_i\in{\mathbb R}$ denotes the $i$'th entry of $x$. Given a function $f:\mathbb{R}\rightarrow\mathbb{R}$, $f'$ denotes the first derivative of $f$. For a vector $x\in{\mathbb R}^m$, $[x]_+$ is the component-wise maximum of the vector $x$ and the zero vector. Given two vectors $x,y\in {\mathbb R}^m$, $x\leq y$ implies element-wise inequality. The vector $e_m\in{\mathbb R}^m$ denotes the $m$ dimensional vector with all elements equal to 1.

\begin{definition}\label{def:strong}
A differentiable function $f(\cdot)$ is said to be \textbf{$\boldsymbol{\mu}$-strongly concave} over the domain ${\cal X}$ if there exists $\mu>0$ such that
\begin{equation}
    \langle \nabla f(x_2)-\nabla f(x_1),x_1-x_2 \rangle\geq \mu\|x_1-x_2\|^2
\end{equation}
holds for all $x_1,x_2\in \cal X$.
\end{definition}

\begin{definition}\label{def:smooth}
A differentiable function $f(\cdot)$ is said to be \textbf{$\boldsymbol{L}$-smooth} over the domain ${\cal X}$ if there exists $L>0$ such that
\begin{equation}
    \|\nabla f(x_1)-\nabla f(x_2)\|\leq L \|x_1-x_2\|
\end{equation}
holds for all $x_1,x_2\in \cal X$.
\end{definition}

\section{PROBLEM SETUP}\label{sec:problem}
We study the standard NUM problem \cite{kelly1998rate}, where the goal is to allocate resources to $n$ users subject to a set of linear coupling constraints such that the total utility of the users is maximized. It can be formulated as the following optimization problem:
\begin{subequations}\label{eq:main}
\begin{align}
    \underset{x\in{\cal X}\subset\label{eq:objective} \mathbb{R}^n}{\max}&~ f(x)=\sum_{i=1}^n f_i(x_i)\\
    \textnormal{s.t.}&~Ax\leq c,\label{eq:constraint}
\end{align}
\end{subequations}
where $f_i(\cdot)$ is the strictly increasing and concave utility function of user $i$, ${\cal X}={\cal X}_1\times {\cal X}_2\dots \times{\cal X}_n$ with ${\cal X}_i=[\underline{x}_i,\overline{x}_i]$ (where $\underline{x}_i\geq 0$, and $\overline{x}_i=\infty$ is allowed), $c\in \mathbb{R}_+^m$, and $A\in\{0,1\}^{m\times n}$ is an $m\times n$ matrix\footnote{In this work, we study the setting where $A\in\{0,1\}^{m\times n}$ and leave the case where $A\in \mathbb{R}^{m\times n}$ as a future direction.}.

Due to the constraint \eqref{eq:constraint}, the feasible region of \eqref{eq:main} $\overline{{\cal X}}={\cal X}\cap\{x|Ax\leq c\}$ is compact, where $\overline{{\cal X}}\subseteq \prod_{i\in [n]} [\underline{x}_i,\max_{j\in[m]}c_j]$. Over this region ${\overline{\cal X}}$, we make the following assumption on the utility functions:
\begin{assumption}\label{ass:stronglyconcave}
For all $i\in[n]$, the utility function $f_i(\cdot)$ is $\mu_i$-strongly concave over $\overline{\cal{X}}$.
\end{assumption}
Standard utility functions considered in the literature such as the $\alpha$-fair utility functions (see
\cite{mo2000fair}) satisfy the strong concavity assumption over the compact interval ${\overline{\cal{X}}}$. Note that the objective function \eqref{eq:objective} is strongly concave with coefficient $\mu=\min_{i\in[n]}\mu_i$. Accordingly, the convex optimization problem \eqref{eq:main}, whenever feasible, has a unique solution denoted by $x^\star$ and an optimal objective value denoted by $f^\star$. 

\begin{algorithm}[t]
    \caption{Dual Subgradient Method \cite{palomar2006tutorial}}
    \begin{algorithmic}[1]
    \Require Initialize $\lambda^1\geq 0$, step size $\gamma$  
        \For{$t=1,2,\dots$}
        	\State Each user $i\in[n]$ receives $p_i^t\vcentcolon=[A^T\lambda^t]_i$ and solves
			\begin{equation}
			    x_i^t=\underset{x_i\in{\cal X}_i}{\argmax}f_i(x_i)-p_i^tx_i
			\end{equation}
            \State The dual vector $\lambda^t$ is updated as:
            \begin{align}
                \lambda^{t+1}=\max\{0,\lambda^t+\gamma(Ax^t-c)\}
            \end{align}
        \EndFor
    \end{algorithmic}
    \label{alg:dualsub}
\end{algorithm}
Since $f_i(\cdot)$ are private to the users, \eqref{eq:main} can not be solved centrally. Therefore, dual decomposition methods have been proposed in order to decompose \eqref{eq:main} into subproblems that can be solved in a distributed fashion \cite{palomar2006tutorial}. In order to decompose \eqref{eq:main}, we let $\lambda\in \mathbb{R}_+^m$ be the dual vector and form the Lagrangian:
\begin{equation}
    \begin{split}
    q(\lambda)=\underset{x\in{\cal X}}{\max}~ \sum_{i=1}^nf_i(x_i)-\lambda^T(Ax-c)
    \end{split}
\end{equation}
The Lagrangian formulation allows us to separate \eqref{eq:main} into two levels of optimization. At the lower level, each user $i\in[n]$ solves the following subproblem using their own utility function and the pricing signal $p_i\vcentcolon=[\lambda^TA]_i$:
\begin{equation}\label{eq:user}
    q_i(\lambda)=\underset{x_i\in{\cal X}_i}{\max} f_i(x_i)-p_ix_i.
\end{equation}
At the higher level, the master problem determines the dual variable by solving the dual problem:
\begin{equation}\label{eq:dual}
    \underset{\lambda\geq 0}{\min}~ q(\lambda)=\sum_{i=1}^n q_i(\lambda)+\lambda^Tc.
\end{equation}
We note that this approach solves the dual problem instead of \eqref{eq:main}. However, the optimal solutions to both problems coincide under strong duality, which is satisfied under the following assumption (Slater's condition):
\begin{assumption}\label{ass:slater}
There exists a vector $\tilde{x}$ in the relative interior of ${\cal X}$ such that $A\tilde{x}<c$.
\end{assumption}
It is well-known (see \cite{rockafellar2015convex}) that, under Assumption~\ref{ass:slater}, strong duality holds for problem \eqref{eq:main}.

In this work, we measure the performance of an algorithm by the suboptimality, convergence rate, and infeasibility of the primal iterates it produces, which are defined as follows:
\begin{definition}
Let $\{x^t\}$, $t\geq 1$, be a sequence of primal iterates produced by an algorithm. For an iterate $x^t$, we define the suboptimality as $f^\star-f(x^t)$ and the infeasibility as $\|[Ax^t-c]_+\|$. The sequence is said to have a convergence rate of $r(t)$, if $\|x^t-x^\star\|\leq r(t)$ for all $t\geq 1$ and $\underset{t\rightarrow\infty}{\lim}r(t)=0$.
\end{definition}

Since the dual problem is convex, one approach for solving the dual problem \eqref{eq:dual} (and thus also \eqref{eq:main}) is to employ the projected subgradient method with constant step-size $\gamma$ outlined in Algorithm~\ref{alg:dualsub} \cite{palomar2006tutorial,beck20141}. It has been shown that without a strong concavity assumption on $f(\cdot)$, after $t$ iterations the objective function value evaluated at $\overline{x}=(1/t)\sum_{i=1}^t x^i$ achieves $f^\star-f(\overline{x}) \leq {\cal O}(\gamma+1/(t\gamma))$\cite{nedic2009approximate}. Additionally, it has been shown that the primal infeasibility $\|[A\overline{x}-c]_+\|$ is bounded by ${\cal O}(1/(t\gamma))$\cite{nedic2009approximate}. These results however do not consider strongly concave objective functions and, thus, the results there remain within the domain of non-smooth convex optimization (a rate of ${\cal O}(1/\sqrt{t})$ is achieved with $ \gamma={\cal O}(1/\sqrt{t})$). When the objective function is strongly concave, the resulting dual objective is smooth, and therefore it is possible to achieve primal suboptimality, convergence rate, and infeasibility of ${\cal O} (1/t)$ for the last iterate \cite{beck20141} (or primal suboptimality and infeasibility of ${\cal O}(1/t^2)$ for the average iterate \cite{necoara2013rate,patrinos2013accelerated,chernov2016fast}). When the objective function is both smooth and strongly concave, global linear convergence rates can be achieved \cite{necoara2015linear}. 

% We note that these fast approaches utilize a weighted gradient method, which requires the strong convexity constants of $f_i$'s.

Although existing distributed first-order methods establish bounds on the infeasibility of the primal iterates, there is no obvious way to modify the algorithms such that the primal iterates are always feasible. In the next section, we propose a first-order algorithm based on the dual decomposition scheme that produces feasible primal solutions at all iterations. In addition, the algorithm should produce primal iterates that result in a sublinear static regret, which is measured by
\begin{equation}\label{eq:regret}
    R(T)=\sum_{t=1}^T f^\star-f(x^t).
\end{equation}
We note that the above definition of regret corresponds to the cumulative sum of suboptimalities of the primal iterates. When the primal iterates are feasible, the solutions are implementable, and therefore regret is a well-defined measure. On the other hand, although the above sum is computable for many of the existing works (e.g., \cite{beck20141,necoara2013rate}), regret is not a well-defined metric since the primal iterates are not necessarily feasible and therefore realizable.

\section{SAFE DUAL GRADIENT METHOD}\label{sec:sdgm}

\begin{algorithm}[t]
    \caption{Safe Dual Gradient Method}
    \begin{algorithmic}[1]
    \Require Initialize $\lambda^1_j=\overline{\lambda}$ for all $j\in[m]$, step sizes $\gamma_-^t$ and $\gamma_+^t$, safety margin vector $\Delta^t\in \mathbb{R}_+^m$   
        \For{$t=1,2,\dots$}
        	\State Each user $i\in[n]$ receives $p_i^t\vcentcolon=[A^T\lambda^t]_i$ and solves
			\begin{equation}
			    x_i^t=\underset{x_i\in{\cal X}_i}{\argmax}f_i(x_i)-p_i^tx_i
			\end{equation}
            \State The dual vector $\lambda^t$ is updated as:
            \begin{align}
                 \label{eq:dualminus}\lambda_j^{t+1}&=\max\{0,\lambda_j^t{-}\gamma_-^t\},~\hspace{-.05cm}\textnormal{if}~[Ax^t+\Delta^t-c]_j{<}0 \\
                  \label{eq:dualplus}\lambda_j^{t+1}&=\min\{\overline{\lambda},\lambda_j^t{+}\gamma_+^t\},~\textnormal{if}~[Ax^t+\Delta^t-c]_j{\geq} 0 
            \end{align}
        \EndFor
    \end{algorithmic}
    \label{alg:safedual}
\end{algorithm}

In this section, we describe the dual update method we propose that produces feasible primal iterates satisfying a sublinear regret. The algorithm, called the safe dual gradient method (SDGM), is outlined in Algorithm~\ref{alg:safedual}. At the heart of the algorithm lie two key ideas: 
\begin{enumerate}[wide]
    \item The classical subgradient method only increases the dual variables after a constraint has been violated, which results in an infeasible primal solution. We add a safety margin ${\Delta}^t$ to the constraints as
    \begin{equation*}
       Ax^t-c\rightarrow Ax^t-c+\Delta^t
    \end{equation*}
    so that the dual variable $\lambda_j^t$ increases when the constraint is $\Delta^t_j$ close to being tight.
    \item In the SDGM, the amounts of the dual updates \eqref{eq:dualminus}-\eqref{eq:dualplus} are independent of the values of the modified constraints (i.e., the perturbed gradient), but only dependent on their signs. This is to ensure that when the $j$'th constraint is $\Delta^t_j$ close to being tight, we increase $\lambda_j^t$ sufficiently by an amount of $\gamma_+^t$ while controlling the reduction of the other dual variables by $\gamma_-^t$ so that the constraint is not violated at iteration $t+1$. If we were to use the actual values of the constraints, then for a constraint $j$ that is ${\Delta^t_j}$ close to being tight, we could only ensure less than ${\cal O}(\Delta^t_j)$ (potentially very close to 0) increase in $\lambda_j^t$. Combined with a reduction in another constraint $k$, which can be as big as ${\cal O}(c_k)$, this might result in a large increase in $x_i^t$ for which $A_{ji}=A_{ki}=1$ (since $p_i^{t}-p_i^{t+1}=[A^T(\lambda^t-\lambda^{t+1})]_i$ can be big). A large increase in $x_i^t$ could cause the constraint $j$ to be violated at the next iteration. On the other hand, by using a normalized update rule we ensure that $x_i^t$ does not increase at the next iteration.
\end{enumerate}

We note that Algorithm~\ref{alg:safedual} is similar to sign gradient methods \cite{riedmiller1993direct}, where the plus and the minus directions have different step sizes. Although convergence guarantees of sign-based gradient methods have been established for unconstrained non-convex optimization \cite{bernstein2018signsgd}, we are not aware of explicit non-asymptotic converge rates for convex optimization with inequality constraints (even using the same step sizes for plus and minus updates).

It is necessary that the initial dual variables produce feasible primal solutions. Since this has to hold before getting any feedback from the users, we make the following assumption:
\begin{assumption}\label{ass:lambdabound}
For all constraints $j\in[m]$, there exists a uniform bound $\overline{\lambda}$ such that if $\lambda_j^t=\overline{\lambda}$ then $[Ax^t-c]_j\leq 0$.
\end{assumption}
Assumption~\ref{ass:lambdabound} is not too restrictive and is satisfied in practice. For instance, if $f_i'(\cdot)$ is bounded by $M$ in $\overline{\cal X}$ for all $i\in[n]$, then $\overline{\lambda}=M$ satisfies the assumption. 

In the next section, we characterize a principled way to choose parameters $\Delta^t$, $\gamma_-^t$, and $\gamma_+^t$ in order to produce feasible primal iterates. Additionally, we prove that the regret incurred by the iterates produced by Algorithm~\ref{alg:safedual} is ${\cal O}(\sqrt{T})$.

\section{FEASIBILITY AND REGRET ANALYSIS}\label{sec:regret}
We will first characterize the choice of algorithm parameters that guarantee primal feasibility at all iterations and then prove the regret of Algorithm~\ref{alg:safedual} under this choice of parameters.
\subsection{Feasibility Analysis}
The following proposition characterizes a principled way to choose the parameters $\Delta^t$ and $\gamma_+^t$ with respect to $\gamma_-^t$ that ensures feasible primal iterates:
\begin{proposition}\label{prop:safety}
Let $\Delta_j^t=\frac{[AA^Te_m]_j}{\mu}\gamma_-^t$ for all $j\in[m]$ and $\gamma_+^t=(m-1)\gamma_-^t$. Then for all $t\geq 1$, the iterates $x^t$ produced by Algorithm~\ref{alg:safedual} are feasible, i.e.,
\begin{equation}
    Ax^t-c\leq0,~\forall t\geq 1.
\end{equation}
\end{proposition}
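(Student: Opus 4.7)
My plan is to proceed by induction on $t$. For $t=1$, the initialization $\lambda_j^1=\overline{\lambda}$ for all $j$ together with Assumption~\ref{ass:lambdabound} yields $Ax^1-c\leq 0$ immediately. For the induction step, assuming $Ax^t\leq c$, I would verify $[Ax^{t+1}]_j\leq c_j$ for each row $j\in[m]$ by exploiting the fact that each user's subproblem is a Lipschitz, monotone decreasing response to its own price. Specifically, because $f_i$ is $\mu_i$-strongly concave with $\mu_i\geq\mu$ and the feasible interval ${\cal X}_i$ is a box, the KKT conditions of the subproblem give $|x_i^{t+1}-x_i^t|\leq |p_i^{t+1}-p_i^t|/\mu$, and moreover $x_i^{t+1}\leq x_i^t$ whenever $p_i^{t+1}\geq p_i^t$. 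Combined with $p_i^t=[A^T\lambda^t]_i$, this converts bounds on the dual updates directly into bounds on the demand changes.

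Next I would split the analysis of row $j$ into the two branches of the SDGM update. In the ``up'' branch $[Ax^t+\Delta^t-c]_j\geq 0$, either $\lambda_j^{t+1}$ is clipped at $\overline{\lambda}$, in which case Assumption~\ref{ass:lambdabound} directly yields $[Ax^{t+1}-c]_j\leq 0$, or $\lambda_j^{t+1}-\lambda_j^t=\gamma_+^t=(m-1)\gamma_-^t$. In the latter subcase, for every user $i$ with $A_{ji}=1$ the price change satisfies
\begin{equation*}
p_i^{t+1}-p_i^t = \gamma_+^t + \sum_{k\neq j}A_{ki}(\lambda_k^{t+1}-\lambda_k^t)\geq (m-1)\gamma_-^t - (m-1)\gamma_-^t = 0,
\end{equation*}
using $A_{ki}\in\{0,1\}$ and the fact that at most $m-1$ other dual coordinates can each drop by at most $\gamma_-^t$. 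The monotone response property then gives $x_i^{t+1}\leq x_i^t$ for every active $i$, hence $[Ax^{t+1}]_j\leq[Ax^t]_j\leq c_j$.

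In the ``down'' branch $[Ax^t+\Delta^t-c]_j<0$, I would bound the possible growth of row $j$ via the Lipschitz estimate: for each user $i$ with $A_{ji}=1$ the price can fall by at most $\gamma_-^t[A^Te_m]_i$, so $x_i^{t+1}-x_i^t\leq \gamma_-^t[A^Te_m]_i/\mu$. Summing over all such $i$ and recognizing row $j$ of $AA^Te_m$ produces $[Ax^{t+1}]_j-[Ax^t]_j\leq (\gamma_-^t/\mu)[AA^Te_m]_j=\Delta_j^t$, and combining this with the branch hypothesis $[Ax^t]_j<c_j-\Delta_j^t$ yields $[Ax^{t+1}]_j<c_j$, closing the induction. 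The main obstacle is the tight accounting in the ``up'' branch: the choice $\gamma_+^t=(m-1)\gamma_-^t$ is dictated by the worst case in which every other dual coordinate entering user $i$'s price drops by the full $\gamma_-^t$, and the margin $\Delta_j^t=[AA^Te_m]_j\gamma_-^t/\mu$ is sized so that even a full ``down'' step cannot push any row above its capacity. Strong concavity is essential throughout, since without it the price-to-demand Lipschitz modulus could be unbounded and no finite safety margin would suffice.
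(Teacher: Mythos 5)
Your proof is correct and follows essentially the same route as the paper's: induction on $t$, splitting each constraint row into the two update branches, using the $1/\mu$-Lipschitz, monotone price-to-demand response from strong concavity, with the margin $\Delta_j^t$ absorbing the worst-case demand growth in the ``down'' branch and the choice $\gamma_+^t=(m-1)\gamma_-^t$ forcing nonincreasing demand in the ``up'' branch. The only cosmetic difference is that the paper establishes the Lipschitz/monotonicity facts by explicit casework on the box constraints rather than citing them as a known property of the subproblem.
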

\begin{proof}
We prove Proposition~\ref{prop:safety} by induction. Suppose that $Ax^t-c\leq 0$ holds at time $t$. We consider the following two cases:
\begin{enumerate}[wide]
    \item Pick a constraint $j$ for which $[Ax^t+\Delta^t-c]_j<0$. If for all users, $x_i^{t+1}\leq x_i^t$, then $[Ax^{t+1}-c]_j\leq0$ holds trivially.  If for a user $i$, if $p_i^{t+1}> f_i'(\underline{x}_i)$, then $x_i^{t+1}=\underline{x}_i$ and $x_i^{t+1}\leq x_i^t$ holds. Furthermore, if $p_i^t<f_i'(\overline{x}_i)$, then $x_i^t=\overline{x}_i$ and $x_i^{t+1}\leq x_i^t$ holds. Therefore, in order to have $x_i^{t+1}>x_i^t$, it is necessary to have $f_i'(x_i^{t+1})\geq p_i^{t+1}$, $f_i'(x_i^{t})\leq p_i^t$, and $p_i^{t+1}\leq p_i^t$. Using strong concavity, we have that:
    \begin{align}
         x_i^{t+1}\leq x_i^t+\frac{ f_i'(x_i^t)-f_i'(x_i^{t+1})}{\mu}&\leq\frac{p_i^t-p_i^{t+1}}{\mu}\\
         &\hspace{-.1cm}=\frac{[A^T(\lambda^t-\lambda^{t+1})]_i}{\mu}.
    \end{align}
    From the update rule, we know that for all $j\in[m]$,    $\lambda_j^t-\lambda_j^{t+1}\leq \gamma_-^t$. Therefore, $x_i^{t+1}\leq x_i^t+\frac{\gamma_-^t[A^Te_m]_i}{\mu}$ holds for all $i\in[n]$. Finally, we write
    \begin{equation}
    \begin{split}
        [Ax^{t+1}-c]_j&=[Ax^t-c]_j+[A(x^{t+1}-x^t)]_j\\
        &\leq -\Delta_j^t+\frac{\gamma_-^t [AA^Te_m]_j}{\mu}= 0,
    \end{split}
    \end{equation}
    which concludes the first case.
    \item Now, pick a constraint $j$ for which $[Ax^t+\Delta^t-c]_j>0$. If $\lambda_j^{t+1}=\overline{\lambda}$, then by definition $[Ax^{t+1}-c]_j\leq 0$ holds. Therefore we assume that $\lambda_{j}^{t+1}<\overline{\lambda}$ and  show that for all users $i$ for which $A_{ji}=1$, $p_i^t-p_i^{t+1}=[A^T\lambda^t]_i- [A^T\lambda^{t+1}]_i\leq 0$ (otherwise if $A_{ji}=0$, $x_i^{t+1}$ does not affect constraint $j$). This implies that $x_i^{t+1}\leq x_i^t$.
    \begin{align}
      [A^T\lambda^t]_i-[A^T\lambda^{t+1}]_i&=\sum_kA_{ki}(\lambda_k^t-\lambda_k^{t+1})\\
      &\leq (m-1)\gamma_-^t-\gamma_+^t=0.
    \end{align}
    Since $[Ax^{t}-c]_j\leq 0$ is given, $[Ax^{t+1}-c]_j\leq 0$ holds when $x_i^{t+1}\leq x_i^t$, which concludes the second case.
\end{enumerate}
We have proven that $Ax^{t+1}-c\leq 0$ holds if $Ax^t-c\leq 0$. Since by definition $Ax^1-c\leq 0$ is ensured, $Ax^t-c\leq 0$ holds for all $t\geq 1$.
\end{proof}
Given that under Proposition~\ref{prop:safety}, $x^t$ for all $t\geq 1$ are feasible and therefore implementable, the static regret \eqref{eq:regret} is a valid choice of performance metric. Next, we prove that the regret of Algorithm~\ref{alg:safedual} is ${\cal O}(\sqrt{T})$.
\subsection{Regret Analysis}
The following theorem establishes an upper bound on the regret incurred by the primal iterates produced by Algorithm~\ref{alg:safedual}:
\begin{theorem}\label{thm:regret}
Let $\gamma_-^t=\gamma/\sqrt{t}$ for some $\gamma>0$ and set $\Delta^t$ and $\gamma_+^t$ as in Proposition~\ref{prop:safety}. Then for all $t\geq 1$, the iterates produced by Algorithm~\ref{alg:safedual} are feasible. Furthermore, the regret $R(T)$ for all $T\geq 1$ satisfies
\begin{equation}\label{eq:regretbound}
    R(T)\leq \frac{\overline{\lambda}^2\|c\|_1\sqrt{T}}{\gamma}+2C\gamma\sqrt{T},
\end{equation}
where
\begin{equation}
    C=\|c\|_1+\frac{\overline{\lambda}m(\|A^Te_m\|^2+\rho(A^TA)(m-1)^2/\mu)}{\mu}
\end{equation}
and $\rho(A^TA)$ is the spectral radius of the matrix $A^TA$.
\end{theorem}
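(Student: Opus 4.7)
The feasibility assertion is immediate from Proposition~\ref{prop:safety}, so I focus on the regret bound. My plan is to combine the classical weak-duality reduction with a potential-function analysis of the sign-based dual trajectory, in which the safety margin $\Delta^t$ acts as a shifted constraint.

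Starting point. Since each user maximizes its Lagrangian subproblem exactly, $q(\lambda^t)=f(x^t)+(\lambda^t)^T(c-Ax^t)$, and strong duality (Assumption~\ref{ass:slater}) gives $f^\star=q(\lambda^\star)\leq q(\lambda^t)$. Hence $R(T)\leq \sum_{t=1}^T(\lambda^t)^T(c-Ax^t)$, with every summand non-negative by the established feasibility. Writing $c-Ax^t=\Delta^t-g^t$ with $g^t\vcentcolon= Ax^t+\Delta^t-c$ splits this bound into $\sum_t(\lambda^t)^T\Delta^t$ and $-\sum_t(\lambda^t)^T g^t$. The first piece is direct: $\lambda_j^t\leq\overline{\lambda}$, $\|\Delta^t\|_1=\gamma_-^t\|A^Te_m\|^2/\mu$, and $\sum_{t=1}^T t^{-1/2}\leq 2\sqrt{T}$ give a contribution of order $\overline{\lambda}\|A^Te_m\|^2\gamma\sqrt{T}/\mu$, which is one piece of the $2C\gamma\sqrt{T}$ term. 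For the second piece I split by the sign of $g_j^t$: coordinates with $g_j^t\geq 0$ satisfy $-\lambda_j^t g_j^t\leq 0$ and can be discarded, while those with $g_j^t<0$ satisfy $-g_j^t=c_j-Ax_j^t-\Delta_j^t\leq c_j$ (using $Ax_j^t\geq 0$), producing $-(\lambda^t)^T g^t\leq\sum_{j:g_j^t<0}c_j\lambda_j^t$, which is the crux.

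Case~2 potential bound. When $g_j^t<0$, the update $\lambda_j^{t+1}=\max\{0,\lambda_j^t-\gamma_-^t\}$ yields $(\lambda_j^{t+1})^2\leq(\lambda_j^t-\gamma_-^t)^2$, rearranging into
\[
\lambda_j^t \;\leq\; \frac{(\lambda_j^t)^2-(\lambda_j^{t+1})^2}{2\gamma_-^t}+\frac{\gamma_-^t}{2}.
\]
Multiplying by $c_j$ and summing over $(t,j)$ with $g_j^t<0$, the $\gamma_-^t/2$ remainder contributes $O(\|c\|_1\gamma\sqrt{T})$. For the principal piece I introduce the weighted potential $\Psi^t\vcentcolon=\tfrac12\sum_j c_j(\lambda_j^t)^2$ and apply the identity $\sum_{j:g_j^t<0}c_j[(\lambda_j^t)^2-(\lambda_j^{t+1})^2]=2(\Psi^t-\Psi^{t+1})+\sum_{j:g_j^t\geq 0}c_j[(\lambda_j^{t+1})^2-(\lambda_j^t)^2]$. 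Summation by parts with the non-decreasing weight $1/\gamma_-^t$ and $\Psi^t\leq\Psi^1=\|c\|_1\overline{\lambda}^2/2$ bounds $\sum_t(\Psi^t-\Psi^{t+1})/\gamma_-^t\leq \Psi^1/\gamma_-^T=\|c\|_1\overline{\lambda}^2\sqrt{T}/(2\gamma)$, and after absorbing the factor of two from the identity this gives the leading term $\overline{\lambda}^2\|c\|_1\sqrt{T}/\gamma$ of the theorem.

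Case~1 leakage and assembly. The remaining task is to control the residual $\sum_t(2\gamma_-^t)^{-1}\sum_{j:g_j^t\geq 0}c_j[(\lambda_j^{t+1})^2-(\lambda_j^t)^2]$ produced by the identity above; this is the hardest step. A naive use of $(\lambda_j^{t+1})^2-(\lambda_j^t)^2\leq 2\gamma_+^t\overline{\lambda}+(\gamma_+^t)^2$ gives only $O(T)$, so I rely on the finer primal-dual coupling from the proof of Proposition~\ref{prop:safety}: $|x_i^{t+1}-x_i^t|\leq\gamma_+^t[A^Te_m]_i/\mu$ together with $\|A(x^{t+1}-x^t)\|^2\leq\rho(A^TA)\|x^{t+1}-x^t\|^2$ yields a per-step growth of $(\lambda_j^t)^2$ of order $\rho(A^TA)(m-1)^2(\gamma_-^t)^2/\mu^2$, which after division by $\gamma_-^t$ and summation contributes the $\overline{\lambda} m\rho(A^TA)(m-1)^2\gamma\sqrt{T}/\mu^2$ piece of $2C\gamma\sqrt{T}$. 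Adding up all three contributions yields $R(T)\leq \overline{\lambda}^2\|c\|_1\sqrt{T}/\gamma+2C\gamma\sqrt{T}$. The whole argument rests on exploiting the choice $\gamma_+^t=(m-1)\gamma_-^t$ not merely for feasibility but also to balance the dual ascents against the strong-concavity constant $\mu$, so that the divisions by $\gamma_-^t$ only produce $\sqrt{T}$-scale contributions once the coupling through $\rho(A^TA)/\mu^2$ is accounted for.
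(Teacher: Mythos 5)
Your reduction to $R(T)\leq\sum_{t=1}^T(\lambda^t)^T(c-Ax^t)$ matches the paper, but from there you take a genuinely different route — a quadratic potential $\Psi^t=\tfrac12\sum_j c_j(\lambda_j^t)^2$ on the dual iterates — whereas the paper applies the descent lemma to the $\rho(A^TA)/\mu$-smooth dual function $q(\lambda)$ and telescopes $q(\lambda^t)/\gamma_-^t$. Unfortunately, your route has a genuine gap at exactly the step you flag as hardest: the leakage term $\sum_t(2\gamma_-^t)^{-1}\sum_{j:g_j^t\geq 0}c_j[(\lambda_j^{t+1})^2-(\lambda_j^t)^2]$. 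On an increase step the growth of $(\lambda_j^t)^2$ is $2\lambda_j^t\gamma_+^t+(\gamma_+^t)^2$, which contains the \emph{first-order} term $2\lambda_j^t\gamma_+^t\approx 2\overline{\lambda}(m-1)\gamma_-^t$; this quantity is dictated entirely by the sign-based dual update rule and the current value of $\lambda_j^t$, and is not controlled by the primal increments $|x_i^{t+1}-x_i^t|$ or by $\|A(x^{t+1}-x^t)\|$. Your claimed per-step growth of order $\rho(A^TA)(m-1)^2(\gamma_-^t)^2/\mu^2$ is a second-order quantity and simply does not bound the actual growth. After dividing by $\gamma_-^t$, each increase event contributes $\Theta(1)$, and for a constraint that is active at the optimum the iterates oscillate around $\lambda_j^\star$ with $\Theta(T)$ increase events, so the leakage is $\Theta(T)$, not ${\cal O}(\sqrt{T})$.

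A related lossy step compounds the problem: bounding $-g_j^t\leq c_j$ on decrease coordinates replaces a quantity that is small near the optimum ($-g_j^t=[c-Ax^t]_j-\Delta_j^t$) by a constant, which is what forces your potential argument to absorb a $\Theta(T)$ sum in the first place. The mechanism the paper uses to escape this is that the dual increments are weighted by the gradient $[c-Ax^t]_j$ inside $\langle\nabla q(\lambda^t),\lambda^{t+1}-\lambda^t\rangle$: on precisely the coordinates where $\lambda_j$ increases (set ${\cal A}_3^t$), feasibility of $x^t$ gives $0\leq[c-Ax^t]_j\leq\Delta_j^t={\cal O}(\gamma_-^t)$, so the ascent contribution is $\Delta_j^t\gamma_+^t={\cal O}((\gamma_-^t)^2)$ — second order — and after division by $\gamma_-^t$ it sums to ${\cal O}(\sqrt{T})$. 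The quadratic potential $\Psi^t$ has no such gradient weighting, which is why the smoothness constant $L=\rho(A^TA)/\mu$ must enter through the descent lemma's quadratic remainder $\tfrac{L}{2}\|\lambda^{t+1}-\lambda^t\|^2\leq\tfrac{L}{2}m(\gamma_+^t)^2$ rather than through the primal-dual coupling you invoke. To repair your argument you would essentially have to reweight the potential by the constraint residuals, which brings you back to the dual function $q$.
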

\begin{proof}
According to Proposition~\ref{prop:safety}, $x^t$ is feasible and $Ax^t-c\leq 0$ holds for all $t$. Since $x^t$ is the maximizer of the Lagrangian $f(x)-{\lambda^t}^T(Ax-c)$, strong duality implies that 
\begin{equation}\label{eq:22}
 q(\lambda^t)\vcentcolon=f(x^t)-{\lambda^t}^T(Ax^t-c)\geq f^\star.   
\end{equation}
Accordingly, we can write
\begin{equation}
  R(T)=\sum_{t=1}^T f^\star-f(x_t)\leq \sum_{t=1}^T{\lambda^t}^T(c-Ax^t).
\end{equation}
Since $f_i(\cdot)$ are $\mu$-strongly convex, the dual function denoted as $q(\lambda)$ is $L\vcentcolon=\frac{\rho(A^TA)}{\mu}$-smooth \cite[Lemma II.2]{beck20141}. The descent lemma for smooth functions implies (e.g., \cite{bertsekas1997nonlinear}):
\begin{align}
    q(\lambda^{t+1})\leq q(\lambda^t)+\langle\nabla q(\lambda^t),\lambda^{t+1}-\lambda^t\rangle+\frac{L}{2}\|\lambda^{t+1}-\lambda^t\|^2\label{eq:13}
\end{align}
Next, we decompose the $\langle\nabla q(\lambda^t),\lambda^{t+1}-\lambda^t\rangle$ term into three according to the following sets:
\begin{itemize}\setlength\itemsep{0.5em}
    % \item Let set ${\cal A}^t_1$ be the set of constraints such that $\lambda_j^t\geq \gamma_-^t$ and $[Ax^t+\Delta^t-c]_j<0$.
    \item Let ${\cal A}^t_1=\{j\in[m]|\lambda_j^t\geq \gamma_-^t, [Ax^t+\Delta^t-c]_j<0\}$.
    % \item Let set ${\cal A}^t_2$ be the set of constraints such that $\lambda_j^t<\gamma_-^t$ and $[Ax^t+\Delta^t-c]_j<0$.
    \item Let ${\cal A}^t_2=\{j\in[m]|\lambda_j^t<\gamma_-^t, [Ax^t+\Delta^t-c]_j<0\}$.
    % \item Let set ${\cal A}^t_3$ be the set of constraints such that $[Ax^t+\Delta^t-c]_j\geq 0$.
    \item Let ${\cal A}^t_3=\{j\in[m]| [Ax^t+\Delta^t-c]_j\geq 0\}$.
\end{itemize}
Noting that $\nabla q(\lambda^t)=c-Ax^t$, we write
\begin{align}
    \begin{split}
     &\langle\nabla q(\lambda^t),\lambda^{t+1}-\lambda^t\rangle=-\sum_{j\in{\cal A}^t_1} [c-Ax^t]_j \gamma_-^t \\
     &+\sum_{j\in{\cal A}^t_2} [c-Ax^t]_j (\lambda_j^{t+1}-\lambda_j^t)+\sum_{j\in {\cal A}_3}[c-Ax^t]_j (\lambda_j^{t+1}-\lambda_j^t)
    \end{split}\\
    &\leq -\sum_{j\in{\cal A}^t_1} [c-Ax^t]_j \gamma_-^t +\sum_{j\in{\cal A}^t_3}\Delta_j^t\gamma_+^t\\
    &\leq -\sum_{j\in{\cal A}^t_1} [c-Ax^t]_j \gamma_-^t +\|\Delta^t\|_1\gamma_+^t\label{eq:15}
\end{align}
where the first inequality follows from the fact that for $j\in{\cal A}^t_2$, $(\lambda_j^{t+1}-\lambda_j^t)\leq 0$, and for $j \in{\cal A}^t_3$, $(c-Ax^t)_j\leq \Delta_j^t$ and $(\lambda_j^{t+1}-\lambda_j^t)\leq \gamma_+^t$. We plug \eqref{eq:15} into \eqref{eq:13}, rearrange, and use $\|\lambda^{t+1}-\lambda^t\|^2\leq m(\gamma_+^t)^2$:
\begin{equation}
    \sum_{j\in{\cal A}^t_1} [c-Ax^t]_j\leq \frac{q(\lambda^t)-q(\lambda^{t+1})}{\gamma_-^t}+\frac{\|\Delta^t\|_1 \gamma_+^t}{\gamma_-^t}+\frac{Lm (\gamma_+^t)^2}{\gamma_-^t}.
\end{equation}
Since $\lambda_j^t\leq \overline{\lambda}$ and $c-Ax^t\geq 0$:
\begin{equation}
    \begin{split}
        &\sum_{j\in{\cal A}^t_1} [c-Ax^t]_j\lambda_j^t\leq \overline{\lambda}\sum_{j\in{\cal A}^t_1} (c-Ax^t)_j\\
        &\leq \overline{\lambda}\left(\frac{q(\lambda^t)-q(\lambda^{t+1})}{\gamma_-^t}+\frac{\|\Delta^t\|_1 \gamma_+^t}{\gamma_-^t}+\frac{Lm (\gamma_+^t)^2}{\gamma_-^t}\right)\label{eq:18}
    \end{split}
\end{equation}
For $j\in{\cal A}^t_2$ by definition:
\begin{equation}\label{eq:19}
    \sum_{j\in{\cal A}^t_2} (c-Ax^t)_j\lambda_j^t\leq \|c\|_1\gamma_-^t
\end{equation}
For $j\in{\cal A}^t_3$ by definition:
\begin{equation}\label{eq:20}
    \sum_{j\in{\cal A}^t_3} (c-Ax^t)_j\lambda_j^t\leq \|\Delta^t\|_1\overline{\lambda}
\end{equation}
We sum \eqref{eq:18}, \eqref{eq:19}, and \eqref{eq:20} and plug $\Delta^t$ and $\gamma_+^t$:
\begin{align}
     \nonumber&{\lambda^t}^T(c-Ax^t)\leq \frac{\overline{\lambda}(q(\lambda^t)-q(\lambda^{t+1}))}{\gamma_-^t}+\frac{\overline{\lambda}\gamma_-^t \|A^Te_m\|^2(m-1)}{\mu}\\
     &+\frac{\overline{\lambda}Lm(m-1)^2\gamma_-^t}{\mu}+\|c\|_1\gamma_-^t+\frac{\overline{\lambda} \|A^Te_m\|^2\gamma_-^t}{\mu}
\end{align}
\begin{figure*}[t]
    \centering
    \includegraphics[width=\linewidth]{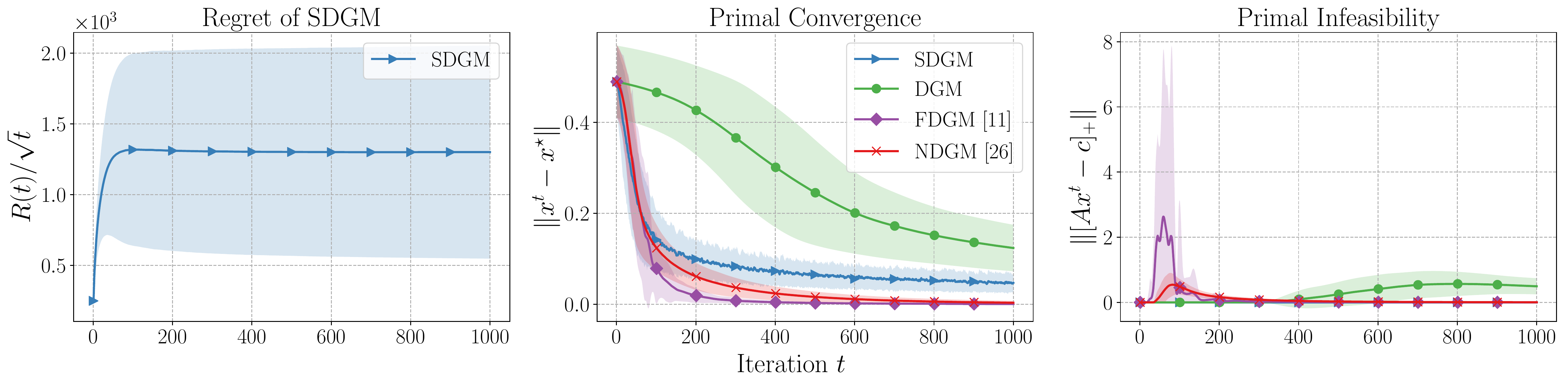}
    \vspace{-.5cm}
    \caption{Results comparing the performances of all four methods for the numerical study described in Section~\ref{sec:num}: Regret of the SDGM (left), the convergence rate of the primal iterates to the optimal solution for all four methods (middle), and the infeasibility amount of the primal iterates for all four methods (right). The solid lines correspond to the means of the 100 experiments, while shaded areas correspond to the standard deviations.}
    \label{fig:experiments}
\end{figure*}
Summing the above inequality from $t=1$ to $T$ with definitions of $C$ and $\gamma_-^t=\gamma/\sqrt{t}$ (with $\gamma_-^0\vcentcolon=0$):
\begin{align}
    &R(T)\leq\overline{\lambda}\sum_{t=1}^T q(\lambda^t) \left(\frac{1}{\gamma_-^t}{-}\frac{1}{\gamma_-^{t-1}}\right){-}\frac{\overline{\lambda}q(\lambda^{T+1})}{\gamma_-^T}{+}C\sum_{t=1}^T \gamma_-^t\\
    &{\leq} \overline{\lambda}(f^\star{+}\overline{\lambda}\|c\|_1)\sum_{t=1}^T \left(\frac{1}{\gamma_-^t}{-}\frac{1}{\gamma_-^{t-1}}\right){-}\frac{\overline{\lambda}f^\star}{\gamma_-^T}{+}C\sum_{t=1}^T \gamma_-^t\\
    &\leq \overline{\lambda}(f^\star+\overline{\lambda}\|c\|_1)\frac{1}{\gamma_-^T}-\frac{\overline{\lambda}f^\star}{\gamma_-^T}+C\sum_{t=1}^T \gamma_-^t\\
    &\leq \overline{\lambda}^2\|c\|_1\frac{\sqrt{T}}{\gamma}+2C\gamma\sqrt{T},
\end{align}
where the first inequality uses \eqref{eq:22} and
\begin{equation}
    q(\lambda^t)=f(x^t)-{\lambda^t}^T(c-Ax^t)\leq f^\star+\overline{\lambda}\|c\|_1
\end{equation}
for any feasible $x^t$.
\end{proof}
According to Theorem~\ref{thm:regret}, Algorithm~\ref{alg:safedual} produces feasible solutions for all $t\geq 1$ that achieve a sublinear regret of ${\cal O}(\sqrt{T})$. To minimize the upper bound on the RHS of \eqref{eq:regretbound}, we let $\gamma=\sqrt{\overline{\lambda}^2\|c\|_1/(2C)}$ to get
\begin{equation}
    R(T)\leq 2\overline{\lambda}\sqrt{2C\|c\|_1T}.
\end{equation}
It is worthwhile to highlight a trade-off between feasibility and performance: It is known that with strongly concave utility functions $f_i(\cdot)$, it is possible to produce primal iterates such that the last iterate (or the average iterate) has suboptimality of ${\cal O}(1/T)$\cite{beck20141} (or ${\cal O}(1/T^2)$\cite{necoara2013rate}) after $T$ iterations, where the primal iterates are not necessarily feasible for all $t\geq 1$. On the other hand, a regret of ${\cal O}(\sqrt{T})$ implies a suboptimality of ${\cal O}(1/\sqrt{T})$ (of the average iterate $\overline{x}$), while producing feasible iterates for all $t\geq 1$. Accordingly, the feasibility of the primal iterates comes at the cost of a slower reduction rate of suboptimality.

In the next section, we numerically demonstrate the feasibility and the convergence rate of, and the regret incurred by the primal iterates produced by Algorithm~\ref{alg:safedual}.

%%%%%%%%%%%%%%%%%%%%%%%%%%%%%%%%%%%%%%%%%%%%%%%%%%%%%%%%%%%%%%%%%%%%%%%%%%%%%%%%%%%
\section{NUMERICAL STUDY}\label{sec:num}
%%%%%%%%%%%%%%%%%%%%%%%%%%%%%%%%%%%%%%%%%%%%%%%%%%%%%%%%%%%%%%%%%%%%%%%%%%%%%%%%%%%

In this section, we compare the performance of the safe dual gradient method developed in Section~\ref{sec:sdgm} with three other distributed algorithms commonly used in the literature for solving the NUM problem: 1) the dual (sub)gradient method (DGM) explained in Section~\ref{sec:problem}, the fast weighted dual gradient method (FDGM) \cite{beck20141}, and  3) the Newton-type diagonally scaled (dual) gradient method (NDGM) introduced in \cite{athuraliya2000optimization}. 

Inspired by \cite{beck20141}, we have implemented all algorithms on a randomly generated collection of 100 networks with a random number of users $n$ taking (integer) values in range $[10,40]$, and a random
number of constraints $m$ taking values in the interval $[5,25]$ (generated independently). For each configuration, we randomly generated the matrix $A$ by sampling $m\times n$ Bernoulli random variables (when a row or a column of $A$ is zero vector, we generate another one). For all users $i\in[n]$, we let the utility function be $\theta_i \log{(x_i+0.1)}$, where $\theta_i$ is sampled from the range $[10,30]$ uniformly at random for each network configuration. We added $0.1$ to the $\log$ function to prevent numerical instability when $x_i$ is close to $0$. We let ${\cal X}_i=[0,\infty)$ for all $i\in[n]$. Finally, we assumed that for all the constraints $j\in[m]$, $c_j=1$. For each configuration, we ran all four methods for $T=1000$  and demonstrate the results in Figure~\ref{fig:experiments}.

In the left figure, we plot the regret incurred by the SDGM. We note that because the other three algorithms do not guarantee primal feasibility for all iterations, regret is not a well-defined metric for their performances. We show that the mean of $R(t)/\sqrt{t}$ is bounded by ${\cal O}(1)$, which implies that $R(t)$ is bounded by ${\cal O}(\sqrt{t})$.

In the middle figure, we plot the convergence of the primal iterates, i.e., $\|x^t-x^\star\|$, for all four methods. The figure shows that although the SDGM is not a fast method as the FDGM and the NDGM, it still performs closely to those fast algorithms and is much better than DGM in terms of convergence rate.

In the right figure, we plot the primal infeasibility, i.e., $\|[Ax^t-c]_+\|$, for all four methods. The figure shows that although FDGM and NDGM are fast, they do not produce feasible iterates for all $t\geq 1$. On the other hand, the SDGM is guaranteed to produce feasible primal iterates.

%%%%%%%%%%%%%%%%%%%%%%%%%%%%%%%%%%%%%%%%
\section{CONCLUSIONS}
%%%%%%%%%%%%%%%%%%%%%%%%%%%%%%%%%%%%%%%%
In this work, we introduced a novel algorithm, called the safe dual gradient method (SDGM), for solving NUM problems in a distributed fashion. In contrast to the literature on first-order distributed methods, where bounds on the feasibility violation of the primal iterates are established, the SDGM is guaranteed to produce feasible primal iterates. This is done by: 1) adding a diminishing safety margin to the constraints, and 2) using a sign-based dual update method with different step sizes for plus and minus directions. Furthermore, we have proven that the regret incurred by the SDGM is ${\cal O}(\sqrt{T})$.

An immediate trade-off is that although the SDGM produces feasible iterates, it converges slower than the state-of-the-art methods. It would be an interesting future direction to study an accelerated version of the SDGM (e.g., similar to \cite{nesterov1983method}) that still produces feasible iterates. Additionally, future work should include more general constraints (e.g., $A\in \mathbb{R}^{m\times n}$) as well as the case when there is uncertainty about constraints.

%%%%%%%%%%%%%%%%%%%%%%%%%%%%%%%%%%%%%%%%%%%%%%%%%%%%%%%%%%%%%%%%%%%%%%%%%%%%%%%%
% \section{ACKNOWLEDGMENTS}

%%%%%%%%%%%%%%%%%%%%%%%%%%%%%%%%%%%%%%%%%%%%%%%%%%%%%%%%%%%%%%%%%%%%%%%%%%%%%%%%

\bibliographystyle{IEEEtran}
\bibliography{references}
% \appendix

\end{document}